\newtheorem{theorem}{Theorem}[section]
\newtheorem{lemma}[theorem]{Lemma}
\newtheorem{corollary}[theorem]{Corollary}
\renewcommand{\geq}{\geqslant}
\theoremstyle{definition}
\theoremstyle{definition}
\numberwithin{equation}{section}
\numberwithin{equation}{section}
 \numberwithin{figure}{section}
\author{Yuguang Zhang}
\address{Institut f\"{u}r Differentialgeometrie,  Gottfried Wilhelm Leibniz Universit\"{a}t Hannover,  Welfengarten 1, 30167 Hannover, Deutschland}
\email{yuguang.zhang@math.uni-hannover.de}
\title[]{A Note on Lagrangian Submanifolds in Symplectic $4m$-Manifolds}
\begin{document}
\begin{abstract} This short paper shows a topological  obstruction of the existence of certain  Lagrangian submanifolds in symplectic $4m$-manifolds.
\end{abstract}

\maketitle

\section{Introduction}
Let $(X, \omega)$ be a  symplectic manifold. A submanifold $L\subset X$ is called Lagrangian if $\omega|_L\equiv0$ and $\dim L=\frac{1}{2}\dim X$.  See the  textbook and papers  \cite{MS,Wei,ALP} for basic properties, general  backgrounds, and developments in the study of   Lagrangian submanifolds.  One question is to understand  topological  obstructions of embedding manifolds into symplectic manifolds as Lagrangian submanifolds, which has been intensively studied  (cf. \cite{ALP,Bi,BC,CM,Kh,S,V,Wel,WZ}).

 In \cite{Z}, a phenomenon has been  noticed while  searching for  obstructions of the existence of positive scalar curvature  Riemannian  metrics   on symplectic 4-manifolds. More precisely,
 Theorem 2.3 in   \cite{Z} should be interpreted as a simple  topological consequence of the presence   of certain  Lagrangian submanifolds in symplectic 4-manifolds, and this  topological constraint is a well-known  obstruction of positive scalar curvature metrics by quoting Taubes's theorem on  the Seiberg-Witten invariant.  The goal of this paper is to reformulate this result in a more suitable context, and to generalise it to the case of $4m$-dimensional symplectic manifolds, which might have some independent interests.

If $X$ is a compact oriented $4m$-dimensional  manifold, the intersection pairing  is the symmetric non-degenerated   quadratic form $$Q:H^{2m}(X, \mathbb{R})\oplus H^{2m}(X, \mathbb{R})\rightarrow \mathbb{R}, \  \  \  \  \  \  \  Q([\alpha],[\beta])=\int_X \alpha\wedge\beta,$$ which is  represented by the diagonal  matrix diag$(1,\cdots,1,-1,\cdots,-1)$ under  a suitable basis. Denote $b_{2m}^+(X)$ the number of  plus ones  in the matrix, and $b_{2m}^-(X)$ the number of minus ones.  Both of   $b_{2m}^\pm(X)$ are topological invariants, and the  $2m$-Betti number $b_{2m}(X)=b_{2m}^+(X)+b_{2m}^-(X)$.  If $X$ admits a symplectic form $\omega$, then $[\omega^m]\in H^{2m}(X, \mathbb{R})$, and $$Q([\omega^m], [\omega^m])=\int_X \omega^{2m}>0,$$ which implies  $$ b_{2m}^+(X)\geq1.$$

The main theorem asserts  that $ b_{2m}^\pm(X)$ provides some constraints of  the existence of certain Lagrangian submanifolds.

\begin{theorem}\label{thm1} Let $(X, \omega)$ be a compact symplectic manifold of dimension $4m$,  $m\geq1$, $ m\in \mathbb{Z}$, and $L$ be an orientable  Lagrangian submanifold in $X$.
  \begin{enumerate}
\item[i)] If  the Euler characteristic $\chi(L)$ of $L$ satisfies $$(-1)^{m}\chi(L)> 0, \ \ \ \ \ \ \ then \  \  \  \  \  \  \  \  \  b_{2m}^+(X)\geq 2.$$
     \item[ii)] If  $L$   represents a non-trivial homology   class, i.e. $$ 0\neq [L]\in H_{2m}(X, \mathbb{R}), \ \ \ and \ \ \   \chi(L) =0, $$  $$then \  \  \  \  \  \  \  \  \  b_{2m}^+(X)\geq 2, \  \  \  \  and \  \  \ \  \ b_{2m}^-(X)\geq1.$$
         \item[iii)] If $$(-1)^{m}\chi(L)< 0, \ \ \ \ \ \ \ then \  \  \  \  \  \  \  \  \  b_{2m}^-(X)\geq 1.$$
      \item[iv)]  If the $2m$-Betti number $b_{2m}(X)=1$, then   $$\chi(L)=0, \ \  \ and  \  \  \  0= [L]\in H_{2m}(X, \mathbb{R}).$$\end{enumerate}
\end{theorem}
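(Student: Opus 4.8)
The plan is to attach to $L$ two classes in $H^{2m}(X,\mathbb{R})$ whose $Q$-pairings are completely determined, and then to read off each of (i)--(iv) by elementary linear algebra in the quadratic space $(H^{2m}(X,\mathbb{R}),Q)$. The two classes are $[\omega^m]$ and the Poincar\'e dual $\mathrm{PD}[L]$. From the introduction, $c:=Q([\omega^m],[\omega^m])=\int_X\omega^{2m}>0$. They are $Q$-orthogonal, since
$$Q([\omega^m],\mathrm{PD}[L])=\int_X\omega^m\wedge\mathrm{PD}[L]=\int_L\omega^m|_L=0$$
because $\omega|_L\equiv 0$; and if $0\neq[L]\in H_{2m}(X,\mathbb{R})$ then $\mathrm{PD}[L]\neq 0$ by Poincar\'e duality. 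The one non-formal ingredient is the self-intersection identity
$$Q(\mathrm{PD}[L],\mathrm{PD}[L])=[L]\cdot[L]=(-1)^m\chi(L),$$
which I would obtain from $[L]\cdot[L]=\int_L e(\nu_L)$, the normal bundle $\nu_L$ being oriented so that the symplectic orientation of $X$ is that of $L$ followed by that of $\nu_L$. Picking an $\omega$-compatible almost complex structure $J$ with metric $g(\cdot,\cdot)=\omega(\cdot,J\cdot)$ and taking $\nu_L=(TL)^{\perp_g}$, one has $J(TL)=\nu_L$ (since $g(Jv,w)=\omega(Jv,Jw)=\omega(v,w)=0$ for $v,w\in TL$), so $v\mapsto Jv$ is a bundle isomorphism $TL\xrightarrow{\sim}\nu_L$ and $\int_L e(TL)=\chi(L)$ by Poincar\'e--Hopf. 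The sign $(-1)^m$ is pure orientation bookkeeping: $T_pX=T_pL\oplus J(T_pL)$ carries the complex orientation of the real frame $v_1,Jv_1,\dots,v_{2m},Jv_{2m}$, which agrees with the symplectic one; reordering it to $v_1,\dots,v_{2m},Jv_1,\dots,Jv_{2m}$ introduces the factor $(-1)^{\binom{2m}{2}}=(-1)^m$, so the orientation of $\nu_L$ induced from that of $X$ differs by $(-1)^m$ from the one carried over from $o(L)$ via $J$, whence $\int_L e(\nu_L)=(-1)^m\chi(L)$. (For $4m=4$ this recovers the familiar $[L]\cdot[L]=-\chi(L)$ for Lagrangian surfaces: self-intersection $-2$ for a sphere, $0$ for a torus --- a useful consistency check.) I expect this orientation computation to be the only delicate step in the whole argument.

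Granting these facts, statement (i) is immediate: if $(-1)^m\chi(L)>0$ then $\{[\omega^m],\mathrm{PD}[L]\}$ has the diagonal Gram matrix $\mathrm{diag}\bigl(c,(-1)^m\chi(L)\bigr)$, which is positive definite, so these classes span a positive-definite $2$-plane and $b_{2m}^+(X)\geq 2$. Statement (iii) is the mirror image: $(-1)^m\chi(L)<0$ exhibits a nonzero class of negative square, so $b_{2m}^-(X)\geq 1$. For (ii), the class $\mathrm{PD}[L]\neq 0$ is now isotropic, and it is not a multiple of $[\omega^m]$ (whose square is nonzero), so $\mathrm{PD}[L]$ and $[\omega^m]$ are linearly independent; by nondegeneracy of $Q$ I can choose $w\in H^{2m}(X,\mathbb{R})$ with $Q(\mathrm{PD}[L],w)=1$ and $Q([\omega^m],w)=0$, and then replace $w$ by $w-\tfrac12 Q(w,w)\,\mathrm{PD}[L]$ to also arrange $Q(w,w)=0$. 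On $\mathrm{span}([\omega^m],\mathrm{PD}[L],w)$ the form $Q$ then has Gram matrix
$$\begin{pmatrix} c & 0 & 0\\ 0 & 0 & 1\\ 0 & 1 & 0\end{pmatrix},$$
which is nondegenerate of signature $(2,1)$; hence these three classes span a subspace of that signature, giving $b_{2m}^+(X)\geq 2$ and $b_{2m}^-(X)\geq 1$.

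Finally, (iv) is a formal consequence of (i)--(iii). If $b_{2m}(X)=1$ then, since $b_{2m}^+(X)\geq 1$ always, necessarily $b_{2m}^+(X)=1$ and $b_{2m}^-(X)=0$; the contrapositives of (i) and (iii) force $(-1)^m\chi(L)\leq 0$ and $(-1)^m\chi(L)\geq 0$, hence $\chi(L)=0$; and since $b_{2m}^+(X)=1<2$, the contrapositive of (ii) then forces $[L]=0$ in $H_{2m}(X,\mathbb{R})$.
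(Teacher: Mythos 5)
Your proposal is correct, and while it rests on the same two pillars as the paper's proof --- the self-intersection identity $[L]^2=(-1)^m\chi(L)$ and the $Q$-orthogonality of $[\omega^m]$ with the Poincar\'e dual of $[L]$ --- both pillars are established and exploited by genuinely different means. For the self-intersection identity the paper invokes the Weinstein neighbourhood theorem and does the orientation bookkeeping in explicit cotangent coordinates ($dx_1\wedge\cdots\wedge dx_{2m}\wedge dp_1\wedge\cdots\wedge dp_{2m}=(-1)^m\varpi_1\wedge\cdots\wedge\varpi_{2m}$), whereas you identify $\nu_L\cong TL$ via a compatible $J$ and track the same sign $(-1)^{\binom{2m}{2}}=(-1)^m$ through the Euler class of the normal bundle; the two computations are equivalent and yours is correct. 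The more substantive divergence is in the deduction of the signature bounds: the paper fixes a compatible metric, proves that $\omega^m$ is self-dual (its Lemma 2.2, a pointwise Hodge-star computation), decomposes the harmonic representative of the Poincar\'e dual as $\alpha_++\alpha_-$, and reads off the conclusions from the $L^2$-norms of $\alpha_\pm$; you instead stay entirely inside the abstract quadratic space $(H^{2m}(X,\mathbb{R}),Q)$ and exhibit a positive-definite $2$-plane (case i), a hyperbolic plane orthogonal to a positive vector (case ii), or a vector of negative square (case iii). Your route is more elementary --- it needs neither Hodge theory nor the identity $\ast\omega^m=\omega^m$ --- and in case ii) it is arguably cleaner, producing the signature-$(2,1)$ subspace in one stroke where the paper argues separately that $\alpha_+\neq0$ and $\alpha_-\neq0$. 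What the paper's route buys is the concrete self-dual harmonic representatives and Lemma 2.2 itself, which are of independent use (e.g.\ in the Seiberg--Witten context motivating the note). Both arguments obtain iv) the same way, as a formal consequence of i)--iii) together with $b_{2m}^+(X)\geq1$.
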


When $X$ is 4-dimensional, this theorem is known to experts (Proposition 2.17 in \cite{WZ}), and 
 i)-ii)  have appeared in the proof of Theorem 2.3 in   \cite{Z}.
 There are some immediate applications of Theorem \ref{thm1} that are surely known to experts (cf \cite{BC,Bi,CM,EGH,S,V}):
 \begin{enumerate}
\item[i)]
 Neither  $\mathbb{CP}^{m}$ nor $S^{2m}$ can be embedded in $\mathbb{CP}^{2m}$ as a Lagrangian submanifold.
 \item[ii)] Only possible orientable  Lagrangian submanifolds in  $\mathbb{CP}^{2}$ are 2-tori $T^2$. Orientable Lagrangian submanifolds in
 $\mathbb{CP}^{1}\times \mathbb{CP}^{1}$ are either tori $T^2$ or spheres $S^2$ by $ b_2^+(\mathbb{CP}^{1}\times \mathbb{CP}^{1})=1$.
  \item[iii)] There is no Lagrangian sphere $S^{4m}$ in  $\mathbb{CP}^{4m-1}\times \Sigma$, where $\Sigma$ is a compact Riemann surface, since $ b_{4m}^+(\mathbb{CP}^{4m-1}\times \Sigma)=1$.
\end{enumerate}

Theorems in \cite{CM,EGH,V} assert that     Riemannian manifolds with negative  sectional
curvature do not admit any  Lagrangian embedding  into   uniruled symplectic manifolds.
As a corollary, we show certain constraints of  the existence of  special metrics on  Lagrangian submanifolds.

\begin{corollary}\label{cor2} Let $(X, \omega)$ be  a compact symplectic $4m$-dimensional  manifold with  $$b_{2m}^+(X)=1.$$ \begin{enumerate}
\item[i)] A compact   orientable $2m$-manifold $Y$ admitting a hyperbolic metric, i.e. a Riemannian metric with sectional curvature $-1$, cannot be embedded into $X$ as a Lagrangian submanifold.   \item[ii)] If $m=2$, and
   $L$ is an orientable  Lagrangian submanifold in $X$ admitting an Einstein metric $g$, i.e. the Ricci tensor $${\rm Ric}(g)=\lambda g$$ for a constant $\lambda\in\mathbb{R}$,  then  a  finite covering  of $L$ is the 4-torus $T^4$.  \end{enumerate}
\end{corollary}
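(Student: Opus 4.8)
The plan is to derive both parts from Theorem~\ref{thm1}~i) together with classical Gauss--Bonnet-type curvature identities, so the role of the symplectic hypothesis $b_{2m}^+(X)=1$ is simply to force $(-1)^m\chi$ of any orientable Lagrangian submanifold to be $\leq 0$.

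For part~i), I would begin from the Chern--Gauss--Bonnet theorem: for a closed oriented Riemannian $2m$-manifold, $\chi$ equals $(2\pi)^{-m}$ times the integral of the Pfaffian of the curvature form, and for a metric of constant sectional curvature $\kappa$ this Pfaffian is a fixed positive multiple of $\kappa^{m}\,dV$ --- the multiple is positive because for $\kappa=1$ the formula must return $\chi(S^{2m})=2>0$. Hence a closed hyperbolic $2m$-manifold $Y$ satisfies $(-1)^{m}\chi(Y)>0$. If $Y$ embedded in $X$ as a Lagrangian submanifold, Theorem~\ref{thm1}~i) would give $b_{2m}^{+}(X)\geq2$, contradicting $b_{2m}^{+}(X)=1$; so no such Lagrangian embedding exists.

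For part~ii), set $m=2$, so $X$ is $8$-dimensional with $b_{4}^{+}(X)=1$ and $L$ is a closed oriented Einstein $4$-manifold, $\mathrm{Ric}(g)=\lambda g$. Since $b_{4}^{+}(X)=1$, Theorem~\ref{thm1}~i) excludes $(-1)^{2}\chi(L)=\chi(L)>0$, so $\chi(L)\leq0$. On the other hand, for a closed oriented Einstein $4$-manifold the Chern--Gauss--Bonnet integrand is pointwise nonnegative, $\chi(L)=\frac{1}{8\pi^{2}}\int_{L}\big(\tfrac{s^{2}}{24}+|W|^{2}\big)\,dV_{g}\geq0$ (this is the Hitchin--Thorpe inequality in the weak form $\chi\geq0$, using that the traceless Ricci part vanishes for an Einstein metric). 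Combining the two inequalities yields $\chi(L)=0$, whence the nonnegative integrand vanishes identically: $s\equiv0$ and $W\equiv0$. Then $\lambda=0$, so $\mathrm{Ric}\equiv0$; and since in dimension four the Riemann tensor decomposes into its Weyl part, its traceless--Ricci part and its scalar part, all of which now vanish, $(L,g)$ is flat. By the Bieberbach theorem a compact flat Riemannian manifold is finitely covered by a flat torus, so a finite covering of $L$ is $T^{4}$.

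I expect the only delicate points to be the bookkeeping of signs --- the $(-1)^{m}$ in the hyperbolic Gauss--Bonnet computation and the nonnegativity of the Einstein integrand --- together with checking that scalar-flat plus Weyl-flat really forces flatness in dimension four; everything else is a direct application of Theorem~\ref{thm1} and of standard results (Chern--Gauss--Bonnet, Hitchin--Thorpe, Bieberbach).
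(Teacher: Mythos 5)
Your proposal is correct and follows essentially the same route as the paper: part i) via the Gauss--Bonnet sign $(-1)^m\chi(Y)>0$ for closed hyperbolic $2m$-manifolds combined with Theorem~\ref{thm1}~i), and part ii) via the Gauss--Bonnet--Chern formula for Einstein $4$-manifolds forcing $\chi(L)\geq 0$, hence $\chi(L)=0$, vanishing curvature, and Bieberbach. The only cosmetic differences are that you justify the positivity of the Gauss--Bonnet constant by evaluating on $S^{2m}$ (the paper just cites \cite{H,KZ}) and that you only need part~i) of Theorem~\ref{thm1} where the paper also mentions the $\chi(L)=0$, $[L]=0$ alternative; neither affects correctness.
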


We remark that the  condition $b_{2m}^+(X)=1$ is necessary for Corollary \ref{cor2} ii). For example, any symplectic 4-manifold $(X, \omega)$ admitting an Einstein metric could be diagonally embedded into the symplectic
8-manifold $(X, \omega)\times (X, -\omega)$ as a Lagrangian.

We prove Theorem \ref{thm1} and Corollary \ref{cor2} by using classical differential topology/geometry techniques in the next section.

{\bf Acknowledgments.} The author thanks the referee for many helpful suggestions. 

\section{Proofs}
Firstly, we recall  basic relevant     facts, and provide the  detail proofs for the completeness.
Let $(X, \omega)$ be a compact symplectic $4m$-dimensional manifold, and $L$ be an orientable  Lagrangian submanifold in $X$.  The symplectic form $\omega$ induces a natural orientation of $X$, i.e. given by $\omega^{2m}$.   We fix an orientation on $L$.

\begin{lemma}\label{le1} $$[L]^2= (-1)^m \chi(L),$$ where $[L]^2$ is the self-intersection number of $L$ in $X$. Consequently, if $\chi(L) \neq0$, then $[L]^2\neq0$, and    $$ 0\neq [L]\in H_{2m}(X, \mathbb{R}).$$
\end{lemma}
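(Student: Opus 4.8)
The plan is to identify the self-intersection number $[L]^2$ with the Euler number of the normal bundle $\nu_L$ of $L$ in $X$, and then to use the Lagrangian condition to relate $\nu_L$ to the cotangent bundle $T^*L$. The first step is standard differential topology: for a closed oriented submanifold $L$ of a closed oriented manifold $X$ with $\dim L = \tfrac12\dim X$, a tubular neighbourhood of $L$ is diffeomorphic to the total space of $\nu_L$, and pushing $L$ off itself by a generic section of $\nu_L$ shows that $[L]^2$ equals the number of zeros of that section counted with sign, i.e. the Euler number $e(\nu_L)[L] = \int_L e(\nu_L)$. I would state this as the identity $[L]^2 = \langle e(\nu_L), [L]\rangle$.

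Next I would invoke the Weinstein Lagrangian neighbourhood theorem, or more elementarily the linear algebra of Lagrangian subspaces: at each point $p\in L$, the symplectic form $\omega_p$ gives a nondegenerate pairing between $T_pL$ and $T_pX/T_pL = (\nu_L)_p$, because $T_pL$ is Lagrangian and hence maximal isotropic, so this pairing identifies $(\nu_L)_p$ with $(T_pL)^* = (T^*L)_p$. This identification is smooth and bundle-linear, giving a bundle isomorphism $\nu_L \cong T^*L$. (One must be a little careful about orientations here: $\omega$ orients $X$, we have fixed an orientation on $L$, and the short exact sequence $0\to TL\to TX|_L\to \nu_L\to 0$ then orients $\nu_L$; the pairing $\omega\colon TL\otimes\nu_L\to\mathbb{R}$ respects these orientations up to a sign $(-1)^{?}$, but since Euler classes of a real bundle only depend on the bundle up to orientation-preserving isomorphism and we only care about $e(\nu_L)$ paired against $[L]$, the relevant statement is that $\nu_L$ and $T^*L$ are isomorphic as unoriented bundles, and their Euler numbers with respect to compatible orientations agree up to the sign that I track below.)

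Then I would compute $\langle e(T^*L), [L]\rangle$. Since $T^*L$ and $TL$ are isomorphic as real vector bundles (via any Riemannian metric on $L$) but with orientations differing by $(-1)^{\dim L} = (-1)^{2m} = 1$ when $\dim L$ is even — wait, more precisely $e(T^*L) = (-1)^{\dim L} e(TL)$ as classes under the metric identification, and here $\dim L = 2m$ is even so $e(T^*L) = e(TL)$ — we get $\langle e(T^*L),[L]\rangle = \langle e(TL),[L]\rangle = \chi(L)$ by the Poincaré–Hopf theorem / Gauss–Bonnet. Combining, $[L]^2 = \pm\chi(L)$, and the task is to pin down the sign as $(-1)^m$. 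I expect the sign to be the main technical obstacle: it comes from commuting the $m$-fold "block swap" in identifying the orientation of $X$ induced by $\omega^{2m}$ (equivalently by $TL\oplus\nu_L$ with $\nu_L\cong T^*L\cong TL$, i.e. the orientation of $TL\oplus TL$) against the orientation used when reading off the intersection number, and each of the $m$ transpositions of a pair of $2$-dimensional... no — rather, the sign $(-1)^{m}$ arises because the symplectic pairing identifies $\nu_L$ with $T^*L$ with a twist that, over the $2m$-dimensional base, contributes $(-1)^{2m^2}$ from reordering basis vectors plus the orientation convention for $\omega^{2m}$ versus the product orientation on a Darboux chart $\mathbb{R}^{2m}_q\times\mathbb{R}^{2m}_p$; concretely one checks it in a single Darboux chart where $L=\{p=0\}$ and computes directly that $\omega^{2m}/(2m)! = dq_1\wedge dp_1\wedge\cdots\wedge dq_{2m}\wedge dp_{2m}$ versus $dq\wedge dp$ reordered, the discrepancy being $(-1)^{\binom{2m}{2}} = (-1)^{m(2m-1)} = (-1)^m$. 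I would carry out exactly this local computation to fix the sign, and then the final sentence of the lemma ($\chi(L)\neq 0 \Rightarrow [L]^2\neq 0 \Rightarrow 0\neq[L]\in H_{2m}(X,\mathbb{R})$) is immediate, since a torsion-free homology class with nonzero self-intersection, equivalently nonzero image under $Q$, cannot vanish in $H_{2m}(X,\mathbb{R})$.
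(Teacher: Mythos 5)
Your proposal is correct and follows essentially the same route as the paper: both identify the tubular neighbourhood of $L$ with $T^*L$ (Weinstein / the fibrewise symplectic pairing), equate $[L]^2$ with the Euler number of the normal bundle computed against the symplectic orientation of $X$, and pin down the sign by comparing $\omega^{2m}$ with the product orientation $dx_1\wedge\cdots\wedge dx_{2m}\wedge dp_1\wedge\cdots\wedge dp_{2m}$ in a chart, yielding $(-1)^{\binom{2m}{2}}=(-1)^m$. The local sign computation you flag as the technical crux is exactly the one the paper carries out, and your concluding deduction from $[L]^2\neq 0$ matches the paper's.
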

\begin{proof}
 The Weinstein neighbourhood theorem (cf. \cite{Wei,MS})  asserts that there is a tubular  neighbourhood $U$ of $L$ in $X$ diffeomorphic to a  neighbourhood of the zero section in the cotangent bundle $T^*L$, and the restriction of $\omega$ on $U$  is the canonical symplectic form on  $T^*L$ under the identification. More precisely,
if $x_1, \cdots, x_{2m}$ are local coordinates on $L$, then the  symplectic form $$\omega=\sum_{i=1}^{2m} dx_i\wedge dp_i,$$ where $p_1, \cdots, p_{2m}$ are coordinates on fibres  induced by $dx_j$, i.e. any 1-form  $\sum\limits_{i=1}^{2m} p_idx_i$ is given by the numbers $p_j$. We regard $L$ as the zero section of $T^*L$, and assume that $dx_1\wedge\cdots\wedge dx_{2m}$ gives the orientation of $L$.

 If we denote  $\varpi_i= dx_i\wedge dp_i$,  $i=1, \cdots, 2m $,  then the algebraic relationships are  $$  \varpi_i\wedge \varpi_j=\varpi_j\wedge \varpi_i, \  \  \ i\neq j, \  \  \  \  \  {\rm and } \ \  \  \varpi_i^2=0.$$
   The orientation inherited from the symplectic manifold is given by $$\omega^{2m}=(2m)!\varpi_1\wedge\cdots \wedge  \varpi_{2m}.$$ The orientation on $L$ induces an orientation on $T^*L$ given by $$dx_1\wedge\cdots \wedge  dx_{2m}\wedge dp_1\wedge\cdots \wedge dp_{2m}=(-1)^{m} \varpi_1\wedge\cdots \wedge  \varpi_{2m}. $$  If $L'$ is a small deformation of $L$ intersecting with $L$ transversally, the intersection number $L\cdot L'$ with respect to the orientation given by $\omega^{2m}$ is the self-intersection number of $L$ in $X$, i.e. $L\cdot L'=[L]^2$.  The Euler characteristic of $L$ equals to the intersection number $L\tilde{\cdot} L'$ with respect to the orientation defined by $dx_1\wedge\cdots \wedge  dx_{2m}\wedge dp_1\wedge\cdots \wedge dp_{2m}$ (cf. \cite{BT}), i.e. $L\tilde{\cdot} L'= \chi(L)$,  where we  identify the tangent bundle $TL$ with $T^*L$ via a Riemannian metric. Since $ L\cdot L'=(-1)^mL\tilde{\cdot} L' $, we obtain
   $ [L]^2= (-1)^m \chi(L)$.\end{proof}

  There are analogous formulae in more general contexts, e.g. see \cite{Web}.

Let $g$ be a Riemannian metric  compatible with $\omega$, i.e. $g(\cdot, \cdot)=\omega(\cdot, J \cdot)$ for an almost complex structure $J$ compatible with $\omega$, and let $\ast$ be the Hodge star operator of $g$. The volume form $dv_g$ of $g$ is the symplectic volume form, i.e.  $dv_g= \frac{1}{(2m)!}\omega^{2m} $.  When $\ast$ acts on $\wedge^{2m}T^*X$, $\ast^2=$Id holds.   $H^{2m}(X, \mathbb{R})$ is isomorphic to  the space of harmonic $2m$-forms by  the Hodge theory   (cf. \cite{GH}).  Thus  $H^{2m}(X, \mathbb{R})$  admits the self-dual/anti-self-dual  decomposition  $$H^{2m}(X, \mathbb{R})\cong \mathcal{H}_{+}(X)\oplus \mathcal{H}_{-}(X), \  \  \  \  \ {\rm where}$$  $$\mathcal{H}_{\pm}(X)=\{\alpha \in C^\infty(\wedge^{2m}T^*X) | d\alpha=0,  \  \    \ast\alpha=\pm \alpha\}.$$ Note that $b_{2m}^\pm(X)=\dim  \mathcal{H}_{\pm}(X).$

\begin{lemma}\label{le2} $$\omega^m\in \mathcal{H}_{+}(X).$$
\end{lemma}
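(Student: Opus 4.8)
The plan is to show that $\omega^m$ is a closed, self-dual $2m$-form with respect to the compatible metric $g$. Closedness is immediate since $d\omega = 0$ implies $d(\omega^m) = 0$, so the entire content is the pointwise identity $\ast \omega^m = \omega^m$ on $\wedge^{2m}T^*X$. This is a purely linear-algebraic statement at each point, so I would fix a point $x \in X$ and work in the tangent space with a $g$-orthonormal basis adapted to $J$ and $\omega$.

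First I would choose, at the point $x$, a basis $e_1, Je_1, \dots, e_m, Je_m, \dots$ — more precisely, pick a unitary frame so that $\omega = \sum_{i=1}^{2m} e_i \wedge f_i$ where $f_i = $ (the dual of) $Je_i$, and $g$ makes $\{e_i, f_i\}$ orthonormal. Then $\omega^m = m! \sum_{|I| = m} e_I \wedge f_I$ where the sum runs over $m$-element subsets $I \subset \{1, \dots, 2m\}$ and $e_I \wedge f_I = \bigwedge_{i \in I}(e_i \wedge f_i)$. The key computation is that the Hodge star of each basis term $e_I \wedge f_I$ equals $e_{I^c} \wedge f_{I^c}$ (up to sign), and one must check that this sign is $+1$ and that the correspondence $I \mapsto I^c$ is an involution on $m$-subsets of a $2m$-set. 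Pairing up $I$ with $I^c$ then gives $\ast \omega^m = \omega^m$. Equivalently, and perhaps more cleanly, one can invoke the standard fact that on a Kähler (or almost-Hermitian) manifold of real dimension $2n$, the operator $L = \omega \wedge \cdot$ and the Lefschetz decomposition interact with $\ast$ via the Weil formula; for a primitive form the sign is controlled by the degree, and $\omega^m$ sits in the "middle" in the relevant sense. I would likely just do the direct orthonormal-frame computation since it is self-contained and matches the elementary tone of the paper.

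The main obstacle is bookkeeping the sign in $\ast(e_I \wedge f_I) = \pm\, e_{I^c} \wedge f_{I^c}$: one has to order the wedge factors consistently with the chosen orientation $dv_g = \frac{1}{(2m)!}\omega^{2m}$, track the permutation sign when moving from the ordering $e_1 f_1 e_2 f_2 \cdots$ to the ordering $e_I \wedge f_I \wedge e_{I^c} \wedge f_{I^c}$, and verify the net sign is $+1$ uniformly in $I$. The parity works out because each transposition of an adjacent $(e_i \wedge f_i)$ block past another such block is an even permutation (moving a $2$-form past a $2$-form), so reordering the blocks costs nothing, and $e_I \wedge f_I \wedge e_{I^c} \wedge f_{I^c}$ is exactly the positively-oriented volume form $\varpi_1 \wedge \cdots \wedge \varpi_{2m}$ up to the reshuffling of blocks, which is sign-neutral. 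Hence $\ast(e_I \wedge f_I) = e_{I^c} \wedge f_{I^c}$ with no sign, and summing over complementary pairs yields $\ast\omega^m = \omega^m$. Combined with $d\omega^m = 0$ and the definition $\mathcal{H}_+(X) = \{\alpha : d\alpha = 0, \ast\alpha = \alpha\}$, this gives $\omega^m \in \mathcal{H}_+(X)$.
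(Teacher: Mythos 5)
Your proposal is correct and follows essentially the same route as the paper: reduce to the pointwise identity $\ast\omega^m=\omega^m$ in an adapted orthonormal/Darboux frame, expand $\omega^m=m!\sum_I \varpi_I$ over $m$-element subsets, and show $\ast\varpi_I=\varpi_{I^c}$ with sign $+1$ because the $2$-form blocks commute and $\varpi_I\wedge\varpi_{I^c}$ is the volume form. The sign bookkeeping you carry out explicitly is exactly what the paper encapsulates in the identity $\varpi_I\wedge\ast\varpi_I=dv_g=\varpi_I\wedge\varpi_{I^c}$.
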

\begin{proof}    Since $d\omega^m=0$,
   we only need to verify  that $\omega^m$ is self-dual, i.e. $\ast\omega^m=\omega^m$, which is a point-wise condition. For any point $x\in X$, we choose coordinates $x_1, \cdots, x_{2m},p_1, \cdots, p_{2m}$ on a neighbourhood of $x$ such that on the tangent space $T_xX$, $$\omega=\sum_{i=1}^{2m} dx_i\wedge dp_i, \  \ \ {\rm and} \ \ \ g=\sum_{i=1}^{2m} (dx_i^2+dp_i^2).$$ The calculation is the same as those in the K\"{a}hler case (cf. \cite{GH}) because of the locality.   If we still write $\varpi_i= dx_i\wedge dp_i$,  $i=1, \cdots, 2m $, then the volume form $dv_g=\varpi_1\wedge\cdots \wedge  \varpi_{2m}.$
Denote the multi-indices sets  $I=\{i_1, \cdots, i_m\}$, $i_1 < \cdots < i_m$,  and the complement  $I^c=\{1, \cdots, 2m\}\backslash I$. Note that  $$\omega^m=(\varpi_{1}+\cdots+ \varpi_{2m})^m=m!\sum_{I\subset \{1, \cdots, 2m\} }\varpi_I, \  \ \     \varpi_I= \varpi_{i_1}\wedge\cdots\wedge \varpi_{i_m}.$$  By $ \varpi_I\wedge \ast \varpi_I= dv_g= \varpi_I\wedge  \varpi_{I^c}$, we obtain
$\ast \varpi_I= \varpi_{I^c}$. Hence $\ast\omega^m=\omega^m$.   \end{proof}

Now we are ready to prove the results.

\begin{proof}[Proof of Theorem \ref{thm1}]  Note that the assumptions imply $0\neq [L]\in H_{2m}(X, \mathbb{R})$ in the cases i), ii) and iii).     If $PU(L)\in H^{2m}(X, \mathbb{R})$ is the Poincar\'{e} dual of $[L]$, then $$PU(L)=\alpha_+ +\alpha_-\neq0,  \  \ \  \ {\rm and} \ \ \ \ \alpha_{\pm}\in \mathcal{H}_{\pm}(X).$$  Since $\int_X \alpha_+\wedge \alpha_- =0$,
     $$ (-1)^m \chi(L)= [L]^2=\int_X(\alpha_+ +\alpha_-)^2=\int_X \alpha_+^2+ \int_X \alpha_-^2.$$

If  $(-1)^m \chi(L) \geq0$,     then $$ \int_X \alpha_+^2 \geq - \int_X \alpha_-^2 = \int_X \alpha_- \wedge\ast\alpha_-=\int_X |\alpha_-|^2_gdv_g \geq 0.$$
If $\alpha_+ =0$, then $\alpha_-=0$ and $PU(L)=0$, which is a contradiction. Thus $\alpha_+\neq 0$, and $$\int_{L}\alpha_+=\int_X\alpha_+ \wedge (\alpha_++\alpha_-)=\int_X \alpha_+^2 =\int_X|\alpha_+|_g^2dv_g\neq 0.$$ Since $$\int_{L}\omega^m =0,$$ $\omega^m $ and $\alpha_+$ are linearly independent in $\mathcal{H}_{+}(X)$. Therefore, we obtain  $b_{2m}^+(X)\geq 2$.

If $\chi(L)=0$, then $$0= [L]^2=\int_X \alpha_+^2+ \int_X \alpha_-^2, \  \  \ {\rm and} \  \ \ \int_X |\alpha_-|^2_gdv_g=\int_X |\alpha_+|^2_gdv_g.$$  Thus $\alpha_-\neq0$  and $b_{2m}^-(X)\geq 1$.

  If we assume $(-1)^m \chi(L) <0$, then $$0> \int_X \alpha_+^2+ \int_X \alpha_-^2, \  \  \ {\rm and} \  \ \ \int_X |\alpha_-|^2_gdv_g>\int_X |\alpha_+|^2_gdv_g\geq0.$$ We obtain the conclusion $b_{2m}^-(X)\geq 1$.

If $b_{2m}(X)=1$, then  $b_{2m}^+(X)=1$,  $b_{2m}^-(X)=0$,  and $H^{2m}(X, \mathbb{R})\cong \mathcal{H}_{+}(X)$.    Hence  $\chi(L)=0$ and  $[L]=0$.
\end{proof}

\begin{proof}[Proof of Corollary \ref{cor2}]  Let $Y$ be an orientable compact   $2m$-manifold, and $h$ be a hyperbolic metric on $Y$. By  the Gauss-Bonnet formula of hyperbolic manifolds (cf. \cite{H,KZ}), $$(-1)^m\chi(Y)=\varepsilon_{2m}{\rm Vol}_h(Y)>0,$$ where $\varepsilon_{2m}>0$ is a constant depending only on $m$, and ${\rm Vol}_h(Y)$ is the volume of the hyperbolic metric. Theorem \ref{thm1} implies  i).

 Now we assume that $m=2$. If $g$ is a Riemannian  metric on $L$, then  the Gauss-Bonnet-Chern formula for 4-manifolds reads   $$\chi(L)= \frac{1}{8\pi^2}\int_{L}(\frac{R^2}{24}+|W^+|_g^2+|W^-|_g^2-\frac{1}{2}|{\rm Ric}^o|_g^2)dv_g,$$ where $R$ is the scalar curvature, $W^\pm$ denotes the self-dual/anti-self-dual Weyl curvature of $g$, and ${\rm Ric}^o={\rm Ric}-\frac{R}{4}g$ is the Einstein tensor (cf. \cite{B}).  If $g$ is Einstein, then  ${\rm Ric}^o\equiv 0$ and thus $\chi(L) \geq0$.
By    Theorem \ref{thm1}, either  $\chi(L)<0$,  or $\chi(L)=0$ and   $[L]=0$ in $H^{4}(X, \mathbb{R})$ since $b_{4}^+(X)=1 $.  Therefore $\chi(L)=0$, which implies that $$R\equiv0,  \  \  \  \  W^\pm\equiv0,  \  \  \  \  \  {\rm Ric}\equiv 0.$$  The curvature tensor of $g$ vanishes, i.e. $g$ is a flat metric. Thus a  finite covering is the torus $T^4$.
\end{proof}

\appendix
\section{ An alternative proof} $$  \text{ by Anonymous Referee}  $$

Proposition 2.17 of  \cite{WZ} gives a very simple argument using the light cone lemma (which is a version
of the Cauchy-Schwartz inequality) 
to eventually prove Theorem 1.1 (i)(ii),
the harder parts, for $m = 1$. In fact, using light cone lemma to give topological restriction on Lagrangians
is not new (cf.  \cite{Wel,Kh}). This argument is easily generalized to prove the  whole Theorem 1.1 for
arbitrary $m$ as follows.  

The light cone lemma asserts that 
for the light cone $ \mathcal{C}$ of signature $(1, n)$ ($n > 0$),  $ \mathcal{C} \subset \mathbb{R}^{1,n} $, any two elements in the forward cone $\mathcal{C}_+ $ have
non-negative inner product. Especially, if the inner product is zero then the two elements are
proportional to each other. Note that  $ \mathcal{C}\backslash \{0\}$  has two connected components. One is the  forward cone $\mathcal{C}_+ $, and the other is the backward cone   $\mathcal{C}_- $.  Furthermore $ \mathcal{C}_+=-\mathcal{C}_-$.

Now we follow Proposition 2.17 of  \cite{WZ} to prove Theorem \ref{thm1} instead.  If $ (-1)^m \chi(L)\geq0$, we have $$PU(L)^2\geq0, \  \  \  \   
[\omega^m]^2>0,  \  \  \  \  [\omega^m]\cdot[L]=0.$$ If we assume $[L]\neq 0$ and $b_{2m}^+(X)=1$, both classes $PU(L)$ and $[\omega^m]
\in H^{2m}(X, \mathbb{R})$ are in the forward cone by choosing a suitable orientation of $L$.   Then
the light cone lemma implies this is a contradiction. That is, if  $b_{2m}^+(X)=1$,  then $ (-1)^m \chi(L)<0$, and if $\chi(L)=0$ then $[L]=0\in  H_{2m}(X, \mathbb{R})$. Other parts of Theorem \ref{thm1} can be similarly argued and are easier (without using Riemannian metrics and self-dual/anti-self-dual decompositions).   

Finally, the same argument of Corollary \ref{cor2} (i) also works to show that a manifold
admitting a complex hyperbolic metric, $X\cong \mathbb{CH}^m/\Gamma$, cannot be embedded as
a Lagrangian since   $$ { \rm Vol}(X)=\frac{(-4\pi)^m}{(m+1)!} \chi(X)$$ by the traditional Gauss-Bonnet theorem.  This in particular
implies that fake projective planes cannot be embedded as Lagrangians of 
$8$-dimensional symplectic manifolds with $ b_4^+=1$.

\end{document}